\def\R{{\Bbb R}}
\def\Om{\Omega}
\def\ep{\epsilon}
\def\f{\frac}
\def\p{\partial}
\def\na{\nabla}
\def\e{{\bf e}}
\def\h{{\mathbf h}}
\def\n{\mathbf{n}}
\def\r{\mathbf{r}}
\def\x{\mathbf{x}}
\def\y{\mathbf{y}}
\def\V{\mathbf V}
\def\mE{{\mathcal E}}
\def\nax1x2{\na_{x_1x_2}}
\def\nay1y2{\na_{y_1y_2}}
\def\nax{\na_{\tiny\x}}
\def\nay{\na_{\tiny\y}}
\def\eref#1{(\ref{#1})}
\def\cor#1{{{#1}}}
\def\feit{f_{\mbox{\tiny EIT}}}
\def\fVh{f_{\mbox{\tiny Vh}}}
\title{A learning-based method for solving ill-posed nonlinear inverse problems: a simulation study of Lung EIT}
\author{Jin Keun Seo\thanks{Department of Computational Science and Engineering, Yonsei University, Korea (\email{seoj@yonsei.ac.kr}, \email{kangcheol@yonsei.ac.kr}, \email{j.ariungerel@gmail.com}, \email{imlkh84@gmail.com})}
\and Kang~Cheol~Kim\footnotemark[1]
\and Ariungerel Jargal\footnotemark[1]
\and Kyounghun~Lee\footnotemark[1]
\and Bastian Harrach\thanks{Department of Mathematics, Goethe University Frankfurt, Germany (\email{harrach@math.uni-frankfurt.de})}}
\begin{document}
       \maketitle
       
\let\thefootnote\relax\footnotetext{\hrule \vspace{1ex} \centering This is a preprint version of a journal article published in\\
 \emph{SIAM J. Imaging Sci.} \textbf{12}(3), 1275--1295, 2019
(\url{https://doi.org/10.1137/18M1222600}).
}

\begin{abstract}
This paper proposes a new approach for solving ill-posed nonlinear inverse problems. For ease of explanation of the proposed approach, we use the example of lung electrical impedance tomography (EIT), which is  known to be a nonlinear and ill-posed inverse problem. Conventionally, penalty-based regularization methods have been used to deal with the ill-posed problem. However, experiences over the last three decades have shown methodological limitations in utilizing prior knowledge about tracking expected imaging features for medical diagnosis. The proposed method's paradigm is completely different from conventional approaches; the proposed reconstruction uses a variety of training data sets to generate a low dimensional manifold of approximate solutions, which allows to convert the ill-posed problem to a well-posed one.  Variational autoencoder was used to produce a compact and dense representation for lung EIT images with a low dimensional latent space. Then, we learn a robust connection between the EIT data and the low-dimensional latent data. Numerical simulations validate the effectiveness and feasibility of the proposed approach. \end{abstract}

\section{Introduction}
Electrical impedance tomography (EIT) aims to provide tomographic images of an  electrical conductivity distribution inside an electrically conducting object such as the human body \cite{Barber1984,Brown1985,Barber1989,Henderson1978,Holder2005,Seo2013}. In EIT, we attach an array of surface electrodes  around a chosen imaging slice of the object to inject currents and measure the induced voltages. Noting that current-voltage relation reflects the conductivity distribution  according to Ohm's law, an accurate conductivity reconstruction by EIT is theoretically possible \cite{Astala2006,Calderon1980,Kenig2007,Kohn1984,Nachman1988,Nachman1996,Sylvester1987}.

However, EIT in a clinical setting has suffered from the fundamental limitations that current-voltage data is very sensitive to the forward modeling errors involving the boundary geometry and the electrode configuration, whereas it is insensitive to local perturbation of the conductivity. Since the inverse problem of EIT is highly ill-posed, the most common techniques are regularized model-fitting approaches (e.g. least square minimization combined with regularization) \cite{Cheney1999,Lionheart2004}. Unfortunately, in a clinical environment, these techniques have not provided satisfactory results in terms of accuracy and resolution, despite of numerous endeavours in the last four decades. Within conventional regularization frameworks including Tikhonov \cite{Tikhonov1977} and total variation regularization, it seems to be very difficult to enforce prior knowledge of possible solutions effectively.

This paper suggests a new paradigm of EIT reconstruction using a specially designed deep learning framework to leverage prior knowledge of solutions. For ease of explanations, this paper focuses on the mathematical model of the time-difference EIT imaging of air ventilation in the lungs. We denote by $\gamma_{t}(\r)$ the conductivity at time $t$ and position $\r$.
The input data for the deep learning is the time-difference of the current-voltage data $\dot{\V}_{t}:= \V_{t}-\V_{t_0}$ in EIT (see section 2 for $\dot{\V}$)  and the output is the difference conductivity image $\dot{\gamma}_{t}:=\gamma_{t}-\gamma_{t_0}$, where $t_0$ denotes a reference time. With fixing time $t$, we will use the shorter notations $\dot{\gamma}$ and $\dot{\V}$ instead of $\dot{\gamma}_{t}$ and $\dot{\V}_{t}$, respectively.   The goal is to learn an EIT reconstruction map $\feit$ from training data set $\{ (\dot{\V}^n, \dot{\gamma}^n): n=1, \ldots, N\}$ such that $\feit (\dot{\V})$ produces a useful reconstruction for $\dot{\gamma}$.

The standard deep learning paradigm is to learn a reconstruction function $\feit:\dot{\V } \mapsto \dot{\gamma}$ using many training data $\{ (\dot{\V}^n, \dot{\gamma}^n): n=1, \ldots, N\}$.  The main issue is to find a suitable deep learning network ($\mathbb{DL}$) which allows to learn a useful reconstruction map $\feit$ from
 \begin{equation} \label{DeepNN0}
 \feit= \underset{f\in \mathbb{DL}}{\mbox{argmin}}\dfrac{1}{N}\sum_{n=1}^{N}\|f(\dot{\mathbf{V}}^n)-\dot{\gamma}^n\|^2.
\end{equation}
The deep learning-based reconstruction method exploits an integrated knowledge synthesis from the training data in order to get a direct reconstruction $\dot{\gamma}=\feit(\dot{\V})$ from a new measurement $\V$.

The deep learning method is very different from the conventional regularized model-fitting method, which can be expressed as:
  \begin{equation}\label{leastSquare0}
\dot{\gamma}=\underset{\dot{\gamma}\in \mathcal H}{\mbox{argmin}}  \| \dot{\V} - \Bbb S \dot{\gamma}\|^2 + \lambda\mbox{\it Reg}(\dot{\gamma}),
\end{equation}
where $\Bbb S\approx \f{\p\dot{\V}}{\p\dot{\gamma}}$ is the Jacobian matrix or sensitivity matrix (see Section 2.1 for details)  and $\mbox{\it Reg}(\dot{\gamma})$ is the regularization term enforcing the regularity of $\dot{\gamma}$, and $\lambda$ is the regularization parameter controlling the trade-off between the residual norm and regularity. Here, $\mathcal H$ is a space for representing images. In the case when the total number of pixels in the image is $d$, $\mathcal H=\R^d$. Since the dimension of $\mathcal H$ mostly is much bigger than the number of independent components in the measurement data $\dot{\V}$, a large number of possible images are consistent with the measurements up to the model and measurement error. Regularization is used to incorporate a-priori information in order to choose the image for which the regularization functional is smallest.
The success of this approach depends on whether the regularization term is indeed a good indicator for realistic lung images. In order to improve image
quality it seems desirable to go beyond standard regularization frameworks and add more specific a priori information.

In this work, we propose to use a deep learning method to find a useful constraint on EIT solutions for the lung ventilation model. We use a variational autoencoder learning technique (or manifold learning approach) to get a nonlinear expression of practically meaningful solutions $\dot{\gamma}$ by variables $\h$ in a low dimensional latent space, i.e., a decoder $\Psi$ is learned from training data to get the nonlinear representation $\dot{\gamma}=\Psi(\h)$. This generates the tripled training data $\{ (\dot{\V}^n, \dot{\gamma}^n, \h^n): n=1, \ldots N\}$. Next, we use the training data to learn a nonlinear regression map $\fVh: \dot{\V} \to \h$, which makes a connection between the latent variables $\h$ and the data $\dot{\V}$.  The nonlinear regression map $\fVh$ is obtained by
\begin{equation} \label{DeepNN-1}
\fVh= \underset{\fVh\in \mathbb{DL}_{h}}{\mbox{argmin}}
\dfrac{1}{N}\sum_{n=1}^{N}\|\fVh(\dot{\mathbf{V}}^n)-\h^n\|^2.
\end{equation}
where $\mathbb{DL}_{h}$ is a deep learning network described in section 2.3.  Then, the reconstruction map $\feit$ is expressed as $ \feit=\Psi \circ \fVh$.   The feasibility of the proposed method is demonstrated by using numerical simulations. The performance can be enhanced by accumulating high quality training data (clinically useful EIT images).

Before closing this introduction section, we should mention that our method does not use {\it ground-truth} labeled data for training, because lung EIT lacks a known ground truth at present. Although we have collected many human experiment data using 16 channel EIT system \cite{Kyounghuun2017}, its ground truthiness is not clear from a clinical point of view. Phantom experimental results cannot be used for ground-truth data, which are far from realistic. The collection of ground-truth training data may require a tough and complex process involving expensive clinical trials. The issue of collecting training data is beyond the scope of this paper.

\section{Time-difference EIT and conventional reconstruction methods}

\subsection{Time-difference EIT}
We briefly explain the mathematical model of an $E$-channel time-difference EIT system in which $E\in \mathbbm{N}$ electrodes are placed around the human thorax. See Fig. \ref{fig:human_thorax} for a sketch of a 16-channel EIT system. We assume that
measurements are taken in the following adjacent-adjacent pattern. A current of strength $I$ is driven
through the $j$-th pair of adjacent electrodes $(\mE^j,\mE^{j+1})$ keeping all other electrodes insulated, where we use the convention that $\mE^{E+1}=\mE^1$. Then the resulting electric potential $u^j_t$ satisfies approximately the shunt model equations (ignoring the contact impedances underneath the electrodes):
\begin{equation}\label{eq:shunt}
\left\{
\begin{array}{r@{\,}l}
\na\cdot(\gamma_t\na u^j_t)&=0 \quad \qquad \mbox{in}~~{\Om},\\
\int_{\mathcal{E}^j}\gamma\na u^j_t\cdot \n\,ds&=I=-\int_{\mathcal{E}^{j+1}}\gamma\na u^j_t\cdot \n \,ds,\\
(\gamma_t\na u^j_t)\cdot\n&=0 \quad \qquad \mbox{on}~~\p{\Om}\setminus \cup_i^{E}\mathcal{E}^i,\\
\int_{\mathcal{E}^i}\gamma_t \na u^j_t\cdot\n&=0 \quad \qquad \mbox{for}~~i\in\{1,\ldots,E\}\setminus\{j,j+1\},\\
u_t^j|_{\mE^i} &= \text{const.} \quad \mbox{for}~~ i=1,\ldots,E,\\
 \sum_{i=1}^{E} u_t^j|_{\mE^i} &=0,
\end{array}\right.
\end{equation}
where $\gamma_t$ is the conductivity distribution inside the imaging domain $\Omega$ at time $t$,
$\n$ is the outward unit normal vector to $\p{\Om}$, and $ds$ is the surface element.

Driving the current through the $j$-th pair of adjacent electrodes, we measure the voltage difference between the $k$-th pair of
adjacent electrodes
\[
V^{jk}_t=u_t^j|_{\mE^k}-u_t^j|_{\mE^{k+1}}.
\]
We measure $V^{jk}_t$ for all combinations of $j,k\in \{1,\ldots,E\}$ excluding
voltage measurements on current-driven electrodes since they are known to be highly affected by skin-electrode contact impedance
which is ignored in the shunt model \cite{harrach2015interpolation} for a possible remedy.
Thus the EIT measurements at time $t$ are given by the $E(E-3)$-dimensional vector
\[
\V_t=(V_t^{1,3},\ldots, V_t^{1,E-1}, V_t^{2,4}, \ldots,V_t^{2,E}, \ldots, V_t^{E,2},\ldots,V_t^{E,E-2})^T\in \R^{E(E-3)},
\]
where the superscript $T$ stands for the transpose of the vector.

\begin{figure}[h]
	\centering
	\includegraphics[width=1\textwidth]{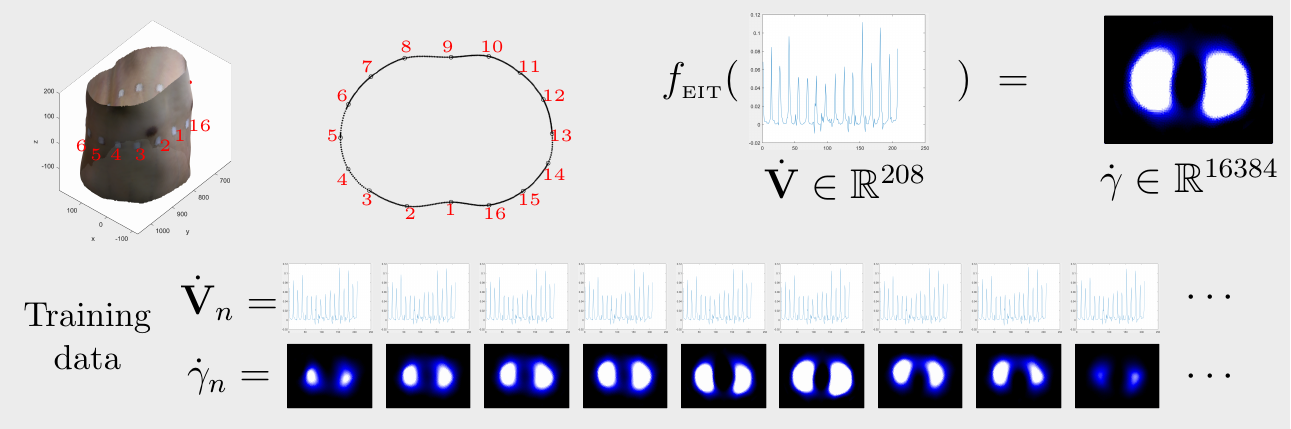}
	\caption{16 channel EIT system for monitoring regional lung ventilation. 16 electrodes are attached around the thorax to inject currents and measure boundary voltages.  The set of current-voltage data (i.e., a discrete version of Neumann-to-Dirichlet data) is used to reconstruct time-difference conductivity images. }
	\label{fig:human_thorax}
\end{figure}

In time-difference EIT, we use the difference of two measurements
\begin{equation}\label{NtDdata}
 \dot{\V}:=\V_{t}-\V_{t_0}\in \R^{E(E-3)},
\end{equation}
between sampling time $t$ and reference time $t_0$ in order to provide an image of the
conductivity difference
\[
\dot{\gamma}:= \gamma_{t}-\gamma_{t_0}.
\]
From the variational formulation of \eqref{eq:shunt}, one obtains the following linear approximation:
\begin{equation}\label{linear-approximation}
\begin{array}{l@{\,}l}
\dot{V}^{jk}&:=V^{jk}_t-V^{jk}_{t_0}=
\f{1}{I}\int_\Om \left[ \gamma_t\na u_t^j\cdot\na u_t^{k} - \gamma_{t_0}\na u_{t_0}^j\cdot\na u_{t_0}^{k} \right] d\r \\
&\approx \f{1}{I}\int_\Om  \dot{\gamma}\na u_{t_0}^j\cdot\na u_{t_0}^{k}  d\r.
\end{array}
\end{equation}
For a computerized image reconstruction, we discretize $\Om$ into finite elements $\Delta_m$, $m=1,2,\ldots, d$, as $\Om\approx \cup_{m=1}^d \Delta_m$, and assume that $\dot{\gamma}$ is approximately constant on each element $\Delta_m$.
Let $\dot\gamma_m\in \R$ denote the value of $\dot{\gamma}$ on $\Delta_m$ and identify $\dot{\gamma}$
with the column vector
\[
\dot{\gamma}=\left( \dot{\gamma}_1,\ldots,\dot{\gamma}_d \right)^T\in \R^d.
\]
Then \eqref{linear-approximation} can be written as
\[
\dot{V}^{jk}\approx \sum_{m=1}^d s_{jk}^m \dot{\gamma}_m \quad \text{ with } s_{jk}^m:=\f{1}{I}\int_{\Delta_m} \na u_{t_0}^j\cdot\na u_{t_0}^{k}  d\r.
\]
To write this in matrix-vector form, we fix $m=1,\ldots,d$, and write the elements of $s_{jk}^m$ as a $E(E-3)$-dimensional vector
\begin{equation}\label{sense_discrete}
\mathbf{S}^m=\left(s^m_{1,3},\ldots,s^m_{1,E-1}, s^m_{2,4}, \ldots,s^m_{2,E}, \ldots, s^m_{E,2}, \ldots,s^m_{E,E-2}\right)^T
\in \R^{E(E-3)}.
\end{equation}
Using these vectors as columns, we define the sensitivity matrix $\mathbb{S}\in \R^{E(E-3)\times d}$, and can thus write (\ref{linear-approximation}) as
\begin{equation}\label{eq:linearized}
\dot{\V}\approx
\sum_{m=1}^d \dot{\gamma}_m \mathbf{S}^m
=\left(
\begin{array}{ccc}
| &&| \\
\mathbf{S}^1& \cdots&\mathbf{S}^d\\
| &&|
\end{array}
\right)
\left(
\begin{array}{c}
\dot{\gamma}_1 \\
\vdots\\
\dot{\gamma}_d
\end{array}
\right)
= \mathbb{S}\, \dot{\gamma}.
\end{equation}

\begin{figure}[h]
	\centering
	\includegraphics[width=1\textwidth]{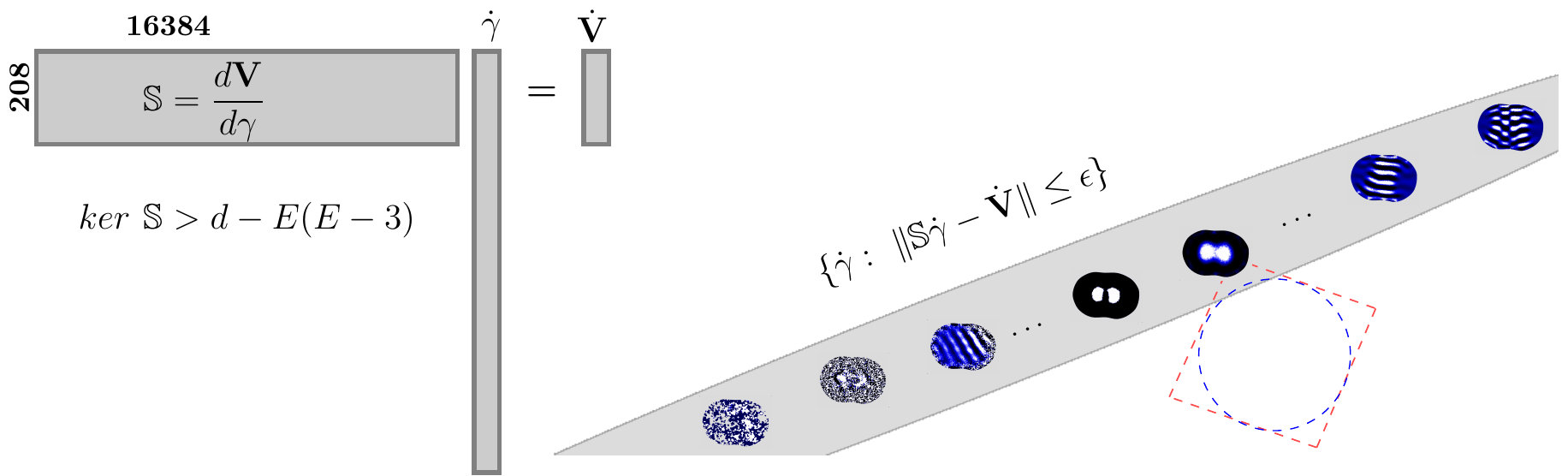}
	\caption{\cor{The system  $\Bbb S \dot{\gamma} =\dot{\V}$ on the left figure is a highly underdetermined problem. The set $\textsl{Sol}_{\ep}(\dot{\V})$ in (\ref{tolerance00}) can be viewed as a $\epsilon$-neighborhood of a space with dimension more than 16000.  The image on the right describes  conventional penalty-based regularization methods that selects an image from the set $\textsl{Sol}_{\ep}(\dot{\V})$.} }
	\label{fig:sensitivity}
\end{figure}
\subsection{Conventional penalty-based regularization methods}
In most practical applications, $d$ (the total number of pixels for $\dot{\gamma}$) is much bigger than $E(E-3)$ (the number of measurements), so that the linearized problem $\Bbb S \dot{\gamma}=\dot{\V}$ is a highly under-determined system.
When a 16-channel EIT system is used to produce images with $128\times 128$ pixels,
then the kernel dimension of $\Bbb S$ is at least $128^2-16*13$, so that a solution
of $\Bbb S \dot{\gamma}=\dot{\V}$ is only unique up to addition of an image coming from a
more than 16000-dimensional vector space.

Moreover, the linearized problem is only a rough approximation of the real situation and the measurements contain unavoidable noises.
Hence, all conductivity distributions $\dot\gamma$ in the wide region
\begin{equation}\label{tolerance00}
\textsl{Sol}_{\ep}(\dot{\V}):= \{ \dot{\gamma}\in \R^d ~:~ \| \Bbb S \dot{\gamma}-\dot{\V}  \|_\text{fid} \le  \ep \}
\end{equation}
have to be regarded as consistent with the measurements, where $\ep$ is a tolerance reflecting modeling and measurement errors,
and $\|\cdot\|_\text{fid}$ is a norm measuring the data fidelity. In the following we simply use $\|\cdot\|_2$ as fidelity norm.

Conventional penalty-based regularization methods reconstruct the conductivity image by choosing $\dot\gamma$ from all consistent candidates in $\textsl{Sol}_{\ep}(\dot{\V})$, so that it is smallest in some norm that penalizes unrealistic results.
Popular approaches include the simple Euclidean norm and the total variation norm which lead to the minimization problems
\begin{equation}\label{leastSquare00}
\dot{\gamma}=\underset{\dot{\gamma}}{\mbox{argmin}}  \| \dot{\V} - \Bbb S \dot{\gamma}\|_2^2 + \lambda\| \dot{\gamma}\|_2^2
\end{equation}
and
\begin{equation}\label{leastSquare1}
\dot{\gamma}=\underset{\dot{\gamma}}{\mbox{argmin}}  \| \dot{\V} - \Bbb S \dot{\gamma}\|^2 + \lambda\| D \dot{\gamma}\|_1,
\end{equation}
where $\lambda>0$ is a regularization parameter and $D\dot{\gamma}$ is the discretized gradient of $\dot{\gamma}$.

The performance of these approaches depends on whether the norm in the penalization term is indeed a good indicator
for realistic images. In order to improve image quality it seems desirable to add more
specific a priori information.

\subsection{Generic deep learning-based method}\label{subsect:generic_deep_learning}

Generic deep learning methods or lung monitoring in E-channel EIT system rely on a training data set of conductivity images and voltage measurements
\[
\{ (\dot{\gamma}_n, \dot{\mathbf{V}}_n)\in \Bbb R^{d}\times \Bbb R^{E (E-3)}:\ n=1,\ldots, N\},
\]
and aim to learn a useful
reconstruction map $\feit$ from a suitable class of functions described by a deep learning network $\mathbb{DL}$
through the minimization problem:
 \begin{equation} \label{DeepNN}
 \feit= \underset{f\in \mathbb{DL}}{\mbox{argmin}}\dfrac{1}{N}\sum_{n=1}^{N}\|f(\dot{\mathbf{V}}_n)-\dot{\gamma}_n\|^2.
\end{equation}
By using a training set, these methods incorporate very problem-specific a-priori information. But they do not take explicitly into
account that the EIT reconstruction problem is highly under-determined and ill-posed.


\section{A manifold learning based image reconstruction method}

\subsection{Motivation: Adding a manifold constraint}\label{subsect:paradigm}
To solve the highly under-determined system $\Bbb S \dot{\gamma}=\dot{\V}$, we follow the new paradigm
that realistic lung images lie on a non-linear manifold that is much lower dimensional than
the space of all possible images. If we can identify a suitable set $\mathcal M$ including images representing lung ventilation, then we can solve the constrained problem
\begin{equation}\label{LinearizedIP}
  \left\|
  \begin{array}{l}
  \mbox{Solve }~\Bbb S \dot{\gamma}~\approx~ \dot{\V}\\
  \mbox{subject to the constraint $\dot{\gamma}\in \mathcal M$. }  \end{array}
  \right.
\end{equation}
The unknown constraint $\mathcal M$ is hoped to be a low dimensional manifold of images displaying lung ventilation such that the intersection $\mathcal M\cap \textsl{Sol}_{\ep}(\dot{\V})$ is non-empty and of small diameter.
With this $\mathcal M$, it is hoped that the constraint problem is ``approximately well-posed"
in the following approximate version of the Hadamard well-posedness \cite{Hadamard1902}:
\begin{enumerate}
  \item[(a)] (Approximate uniqueness and stability) If two images $\dot{\gamma}, \dot{\gamma}'\in \mathcal M$ satisfy $\Bbb S\dot{\gamma}\approx \Bbb S\dot{\gamma}'$, then $\dot{\gamma}\approx \dot{\gamma}'$.
      \item[(b)] (Approximate existence) For any lung EIT data $\dot{\mathbf{V}}$, there exist $\dot{\gamma}\in \mathcal M$ such that $\Bbb S\dot{\gamma}\approx \dot{\mathbf{V}}$.
 \end{enumerate}
Many new theoretical and practical problems arise with this new paradigm.
It is a highly challenging question how to identify and describe manifolds displaying lung ventilation
on which the constrained inverse problem (\ref{LinearizedIP}) is robustly solvable.
A recent step in this direction is the result in \cite{harrach2018uniqueness} which shows that the inverse problems of EIT with
sufficiently many electrodes is uniquely solvable and Lipschitz stable on finite dimensional linear subsets of piecewise-analytic functions.

\subsection{Well-posedness of the inverse conductivity problem on compact sets}
The average image $\f{\dot{\gamma}^1+\dot{\gamma}^2}{2}$ of two different images $\dot{\gamma}^1$ and  $\dot{\gamma}^2$ displaying lung ventilation may not be a useful representation of lung ventilation. Hence, it is desirable to work with low dimensional non-linear manifolds for the conductivity image rather than with low dimensional vector spaces. As a first result to show that the inverse conductivity can be approximately well-posed
under non-linear constraints, we will now show that the inverse conductivity problem with continuous measurements (modeled by the Neumann-Dirichlet-operator) uniformly continuously determines the conductivity in compact sets of piecewise analytic functions.
We expect that the result also holds for
voltage measurements on a sufficiently high number of electrodes though that would require results on the approximation of the Neumann-Dirichlet-operator with the shunt electrode model that are outside the scope of this work.

\cor{For the following result let us also stress that the unique solvability of the inverse conductivity problem for
piecewise analytic conductivity functions and the continuum model is a classical result from Kohn and Vogelius \cite{Kohn1984,kohn1985determining}.
Without further restriction, the inverse conductivity problem is highly ill-posed, and due to the non-linearity,
stability is not a trivial consequence of restricting the conductivity to compact subsets. Alessandrini and Vessella \cite{alessandrini2005lipschitz}
have proven Lipschitz stability for the continuum model when the conductivity belongs to an a-priori known bounded subset of a finite-dimensional linear subspace of $C^2$-functions,
and \cite{harrach2018uniqueness} shows Lipschitz stability for bounded subsets of finite-dimensional linear subspace of piecewise-analytic functions
for the complete electrode model with sufficiently many electrodes. The following result follows the ideas from \cite{harrach2018global,harrach2018uniqueness} (see also \cite{harrach2019monotonicity}) to show that stability holds on any (possibly non-linear) compact subset of piecewise-analytic functions. 
It indicates that our new approach of constructing a low-dimensional non-linear manifold of useful lung images may indeed convert the ill-posed problem into a well-posed one.
}

\begin{theorem}
Let $C\subseteq L^\infty_+(\Omega)$ be a compact set of piecewise analytic functions (in the sense of \cite{kohn1985determining}).
For $\gamma\in C$ let $\Lambda(\gamma)$ denote the Neumann-Dirichlet-operator, i.e.,
\[
\Lambda(\gamma):\ L_\diamond^2(\partial \Omega)\to L_\diamond^2(\partial \Omega),\quad g\mapsto u|_{\partial \Omega},
\]
where $u\in H_\diamond^1(\Omega)$ solves $\nabla\cdot (\gamma\nabla u)=0$ in $\Omega$.

Then for all $\epsilon>0$ there exists $\delta>0$ so that for all $\gamma_1,\gamma_2\in C$
\[
\| \Lambda(\gamma_1)-\Lambda(\gamma_2) \|< \delta \quad \text{ implies } \quad \| \gamma_1-\gamma_2 \|< \epsilon.
\]
\end{theorem}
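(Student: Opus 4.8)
The plan is to prove this by a soft compactness–continuity argument rather than by any quantitative stability estimate: I would combine the \emph{injectivity} of the forward map $\gamma\mapsto\Lambda(\gamma)$ on piecewise analytic conductivities (the Kohn--Vogelius uniqueness theorem \cite{Kohn1984,kohn1985determining}) with its \emph{continuity} and with the \emph{compactness} of $C$. The statement to be proved is exactly the assertion that the inverse of the forward map is uniformly continuous on $C$, and a continuous injective map on a compact metric space automatically has a uniformly continuous inverse. Since $L^\infty_+(\Omega)$ is a metric space, I would phrase everything sequentially.

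First I would record two preliminary facts. Because $C$ is compact in $L^\infty_+(\Omega)$, the functional $\gamma\mapsto\operatorname{essinf}\gamma$ is continuous (indeed $1$-Lipschitz in $\|\cdot\|_\infty$) and strictly positive, so it attains a positive minimum on $C$; likewise $\operatorname{esssup}\gamma$ attains a finite maximum. Hence there are constants $0<c_-\le c_+<\infty$ with $c_-\le\gamma\le c_+$ a.e.\ for every $\gamma\in C$, which yields \emph{uniform} ellipticity and coercivity constants for the variational problems $\nabla\cdot(\gamma\nabla u)=0$ that define $\Lambda(\gamma)$. Using these uniform bounds in the weak formulation, I would then establish the standard elliptic perturbation estimate: writing $u_i$ for the solution associated with $\gamma_i$ and the same Neumann datum $g$, the difference $u_1-u_2$ solves a problem whose right-hand side is $\nabla\cdot((\gamma_2-\gamma_1)\nabla u_2)$, so coercivity gives $\|u_1-u_2\|_{H^1}\le K\|\gamma_1-\gamma_2\|_\infty\|g\|$. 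Passing to traces, this produces the operator-norm bound $\|\Lambda(\gamma_1)-\Lambda(\gamma_2)\|\le K\|\gamma_1-\gamma_2\|_\infty$, i.e.\ $\gamma\mapsto\Lambda(\gamma)$ is Lipschitz continuous on $C$ with a constant $K$ depending only on $c_-$, $c_+$ and $\Omega$.

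With these facts in hand the main argument is a contradiction. Suppose the conclusion fails; then there exist $\epsilon_0>0$ and sequences $(\gamma_1^n),(\gamma_2^n)\subseteq C$ with $\|\Lambda(\gamma_1^n)-\Lambda(\gamma_2^n)\|\to0$ but $\|\gamma_1^n-\gamma_2^n\|_\infty\ge\epsilon_0$ for all $n$. By sequential compactness of $C$ I would pass to subsequences with $\gamma_1^n\to\gamma_1^\ast$ and $\gamma_2^n\to\gamma_2^\ast$, and since $C$ is closed the limits lie in $C$ and are therefore piecewise analytic. The continuity established above gives $\Lambda(\gamma_i^n)\to\Lambda(\gamma_i^\ast)$ in operator norm, whence $\|\Lambda(\gamma_1^\ast)-\Lambda(\gamma_2^\ast)\|=\lim_n\|\Lambda(\gamma_1^n)-\Lambda(\gamma_2^n)\|=0$, i.e.\ $\Lambda(\gamma_1^\ast)=\Lambda(\gamma_2^\ast)$. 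Kohn--Vogelius injectivity then forces $\gamma_1^\ast=\gamma_2^\ast$, while passing to the limit in the lower bound yields $\|\gamma_1^\ast-\gamma_2^\ast\|_\infty\ge\epsilon_0>0$, a contradiction.

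I expect the only genuinely analytic step to be the continuity of $\gamma\mapsto\Lambda(\gamma)$, and the crucial point there is that the perturbation constant $K$ must be \emph{uniform} over $C$; this is precisely what the compactness-derived ellipticity bounds $c_-\le\gamma\le c_+$ supply. Injectivity is not the obstacle, being quoted from Kohn--Vogelius. The conceptual subtlety worth stressing is that the proof is purely qualitative: it uses nothing beyond injectivity, continuity and compactness, so the resulting modulus $\delta(\epsilon)$ is non-constructive — which is exactly the sense in which restricting the conductivity to a compact, possibly non-linear, set converts the ill-posed problem into a well-posed one.
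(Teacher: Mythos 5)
Your argument is correct, but it follows a genuinely different route from the paper's. You treat the theorem as an instance of the classical topological fact that a continuous injection on a compact metric space has a uniformly continuous inverse: you import injectivity wholesale from Kohn--Vogelius \cite{Kohn1984,kohn1985determining}, supply the forward continuity yourself via the elliptic perturbation estimate $\|\Lambda(\gamma_1)-\Lambda(\gamma_2)\|\le K\|\gamma_1-\gamma_2\|_\infty$ (correctly noting that the uniformity of $K$ over $C$ comes from the compactness-derived bounds $c_-\le\gamma\le c_+$), and close with a sequential compactness contradiction. The paper instead works entirely inside the monotonicity framework of \cite{harrach2018uniqueness}: it bounds $\|\Lambda(\gamma_1)-\Lambda(\gamma_2)\|$ from below by $\sup_g f(\gamma_1,\gamma_2,\gamma_2-\gamma_1,g)$ with $f$ built from the derivative $\Lambda'$, shows this supremum is a lower semicontinuous and pointwise positive function on the compact set $C\times C\times K$ (positivity being the localized-potentials step that replaces the appeal to Kohn--Vogelius), and takes $\delta$ to be its attained positive infimum. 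Your version is more elementary and modular --- uniqueness is a black box, and the argument would apply verbatim to any injective, locally Lipschitz forward map restricted to a compact set --- whereas the paper's version exhibits $\delta$ as the minimum of an explicit variational quantity and is the formulation that carries over to electrode models and to Lipschitz stability on finite-dimensional subsets in the cited work. Both proofs are equally non-constructive in their use of compactness. Two small points to tighten: justify that the trace and Poincar\'e constants entering $K$ depend only on $\Omega$ (they do, since the Neumann datum $g$ is fixed in $L^2_\diamond(\partial\Omega)$ and coercivity holds on $H^1_\diamond(\Omega)$), and note explicitly that the Kohn--Vogelius uniqueness statement for the Dirichlet-to-Neumann map transfers to the Neumann-to-Dirichlet operator used here.
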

\begin{proof}
Let $\epsilon>0$. As in \cite{harrach2018uniqueness}, we have that for all $\gamma_1,\gamma_2\in C$ with $\|\gamma_1-\gamma_2\|\geq \epsilon$
\begin{align*}
\|\Lambda(\gamma_1)-\Lambda(\gamma_2)\| & \geq \sup_{g\in L_\diamond^2(\partial\Omega),\ \|g\|=1} f(\gamma_1,\gamma_2,\gamma_2-\gamma_1,g)\\
&\geq \inf_{\tau_1,\tau_2\in C,\ \kappa\in K} \sup_{g\in L_\diamond^2(\partial\Omega),\ \|g\|=1} f(\tau_1,\tau_2,\kappa,g),
\end{align*}
\cor{where $f$ is defined by taking the maximum of two values arising from monotonicity inequalities in \cite{harrach2018uniqueness}
\begin{align*}
f&:\  L^\infty_+(\Omega)\times L^\infty_+(\Omega) \times L^\infty(\Omega)\times L_\diamond^2(\partial \Omega) \to \R,\\
f(\tau_1,\tau_2,\kappa,g)&:=\max\{ \langle (\Lambda'(\tau_1)\kappa)g,g\rangle, -\langle (\Lambda'(\tau_2)\kappa)g,g\rangle\},
\end{align*}
}and
\begin{align*}
K&:=\{ \kappa=\tau_1-\tau_2:\ \tau_1,\tau_2\in C,\ \|\tau_1-\tau_2\|\geq \epsilon\}.
\end{align*}
From the compactness of $C$, it easily follows that also $K$ is compact. The function
\[
(\tau_1,\tau_2,\kappa)\mapsto \sup_{g\in L_\diamond^2(\partial\Omega),\ \|g\|=1} f(\tau_1,\tau_2,\kappa,g)
\]
is lower semicontinuous (see \cite{harrach2018uniqueness}) and thus attains its minimum over the compact set
$C\times C\times K$. With the same arguments as in \cite[Lemma~2.11]{harrach2018uniqueness} it follows that
\[
\sup_{g\in L_\diamond^2(\partial\Omega),\ \|g\|=1} f(\tau_1,\tau_2,\kappa,g)>0 \quad \text{ for all } (\tau_1,\tau_2,\kappa)\in C\times C\times K,
\]
so that we obtain
\begin{align*}
\delta:=&\inf_{\tau_1,\tau_2\in C,\ \kappa\in K} \sup_{g\in L_\diamond^2(\partial\Omega),\ \|g\|=1} f(\tau_1,\tau_2,\kappa,g) > 0.
\end{align*}
Hence,
\[
\|\Lambda(\gamma_1)-\Lambda(\gamma_2)\| \geq \delta \quad \text{ for all } \quad \gamma_1,\gamma_2\in C \text{ with } \|\gamma_1-\gamma_2\|\geq \epsilon,
\]
so that the assertion follows by contraposition.
\end{proof}

\subsection{Filtered data}\label{filter} Before we aim to find a manifold representation of lung ventilation images,
we preprocess the voltage measurements to remove geometry modeling errors.
In practical lung EIT, it is cumbersome to take account of patient-to-patient variability in terms of the boundary geometry and electrode positions, and it requires considerable effort to accurately estimate geometry information. Moreover, the voltage measurements $\dot{\V}$ can be affected by respiratory motion artifacts. Hence, it is desirable to filter out these boundary uncertainties as much as possible, to extract a ventilation-related signal, denoted by $\dot{\V}_{\mbox{\footnotesize lung}}$.

To this end, we preprocess the voltage measurements as in \cite{Kyounghuun2017}. We extract the boundary error, denoted by $\dot{\mathbf{V}}_{\mbox{\tiny err}}$,  by using the boundary sensitive Jacobian matrix $\mathbb{S}_{\mbox{\tiny bdry}}$:
$$
\dot{\mathbf{V}}_{\mbox{\tiny err}} :=\mathbb{S}_{\mbox{\tiny bdry}}\left(\mathbb{S}^T_{\mbox{\tiny bdry}}\mathbb{S}_{\mbox{\tiny bdry}}+
\lambda\mathbb{I}\right)^{-1}\mathbb{S}^T_{\mbox{\tiny bdry}}\dot{\mathbf{V}}
$$
where $\lambda$ is a regularization parameter, $\mathbb{I}$ is the identity matrix, and $\mathbb{S}_{\mbox{\tiny bdry}}$ is a sub-matrix  of  $\mathbb{S}$ consisting of all columns corresponding to the triangular elements located adjacent to the boundary. Then, the filtered data $\dot{\V}_{\mbox{\footnotesize lung}}= \dot{\mathbf{V}}- \dot{\mathbf{V}}_{\mbox{\tiny err}}$ is not so sensitive to the boundary $\p\Om$ and motion artifacts \cite{Kyounghuun2017}.

From now on, we use this filtered data $\dot{\V}_{\mbox{\footnotesize lung}}$ for the reconstruction instead of  $\dot{\V}$, in order to alleviate the boundary error and motion artifacts.  For notational simplicity, we use the same notation $\dot{\mathbf{V}}$ for the filtered data $\dot{\V}_{\mbox{\footnotesize lung}}$.

\subsection{Low dimensional manifold representation}\label{subsect:low_dim_manifold}

Assume that we are given a training data set of conductivity images and voltage measurements from an $E$-channel EIT system
\[
\{ (\dot{\gamma}_n, \dot{\mathbf{V}}_n)\in \Bbb R^{d}\times \Bbb R^{E (E-3)}:\ n=1,\ldots, N\}.
\]
Instead of directly applying a generic deep learning approach as described in subsection \ref{subsect:generic_deep_learning}, we follow
the new paradigm described in subsection \ref{subsect:paradigm} that images of lung ventilation
lie on a low dimensional manifold $\mathcal M$ on which the inverse problem is approximately well-posed.

We therefore first use the conductivity images in the training data set
\[
\{ \dot{\gamma}_n\in \Bbb R^{d}:\ n=1,\ldots, N\}
\]
to generate the low dimensional manifold $\mathcal{M}$.
In an $E$-channel EIT system, the number of independent information of current-voltage data is at most $E(E-3)/2$, due to the reciprocity $\dot{V}^{ji}=\dot{V}^{ij}$. Hence, in order to make the inverse problem approximately well-posed, we aim to generate $\mathcal M$ with dimension less than $E(E-3)/2$.
\begin{figure}[h!]
	\centering
	{\includegraphics[width=0.7\textwidth]{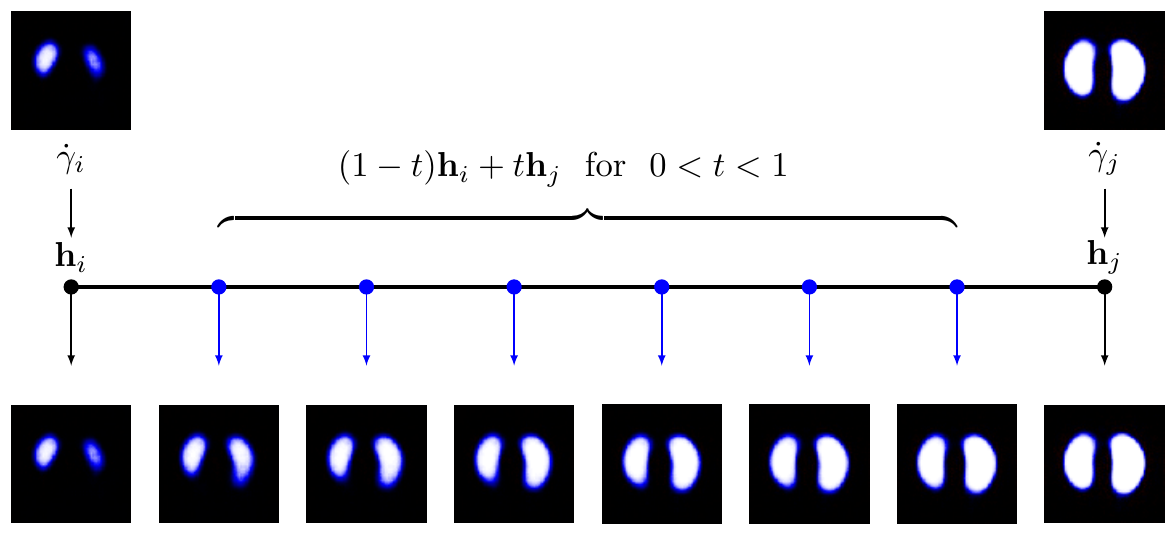}}
	\caption{\cor{Interpolation between two points $\h_i$ and $\h_j$ in the latent space. Given two images $\dot \gamma_i=\Psi(\h_i)$ and $\dot \gamma_j=\Psi(\h_j)$, VAE allows to generate  the interpolated image $\Psi((1-t)\h_i+t\h_j)$ for $0<t<1$. }}
	\label{fig:result2}
\end{figure}

\begin{figure}[h!]
	\centering
	{\includegraphics[width=0.9\textwidth]{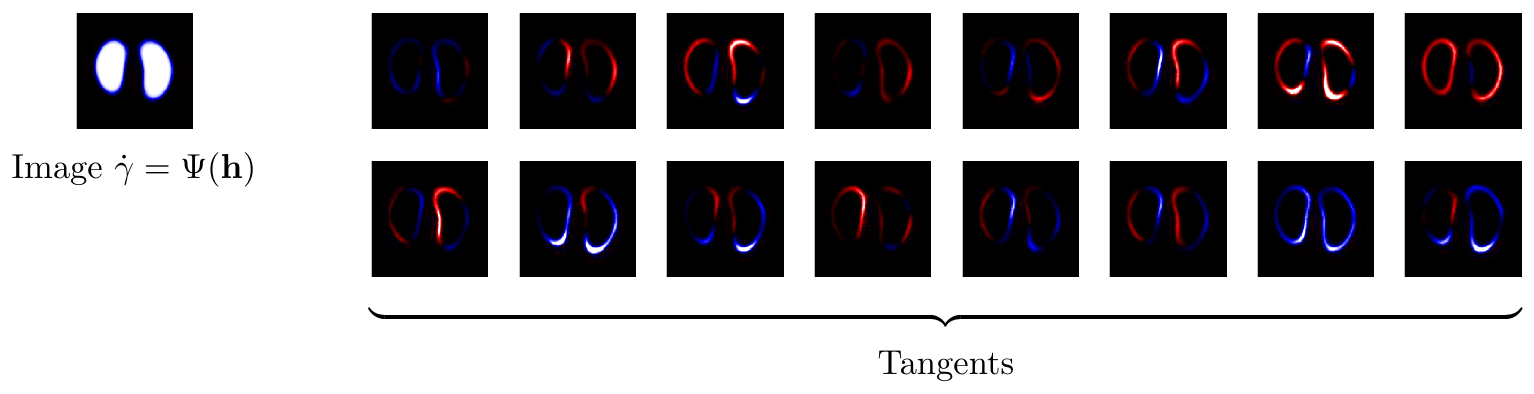}}
	\caption{\cor{Tangent vector of $\mathcal M$. Assuming that $\dot{\gamma}=\Psi(\h)$ is the image on the top left, its gradient $\nabla \Psi(\h)$ can be expressed as the images on the right side. }}
	\label{fig:tangent}
\end{figure}
\subsubsection{Autoencoder}
\cor{Given a dataset of lung EIT images, a variational autoencoder (VAE) \cite{Kingma2013} technique is used to learn the distribution of lung EIT images with the assumption that lung EIT image data (high dimensional) actually lies on a low dimensional manifold $\mathcal M$. }  For the ease of explanation of our idea, we start by first explaining the proposed method with the well-known standard autoencoder, instead of VAE.
Autoencoder uses the training data set $\{ \dot{\gamma}_n\in \Bbb R^{d}:\ n=1,\ldots, N\}$ to learn two functions
(called encoder and decoder)
\[
\Phi:\ \R^d\to \R^k \quad \text{ and } \quad \Psi:\ \R^k\to \R^d
\]
from a class of functions $\mathbb{AE}$ described by a deep learning network by minimizing
\begin{equation} \label{DeepNN2}
(\Psi,\Phi)=  \underset{(\Psi,\Phi) \in \mathbb{AE}}{\mbox{argmin}}\dfrac{1}{N}\sum_{n=1}^{N} \| \Psi\circ\Phi (\dot{\gamma}_n) -\dot{\gamma}_n\|^2.
\end{equation}
Choosing $k<<d$, one can interpret the encoder's output $\h=\Phi(\dot{\gamma})$ as a compressed latent representation, whose dimensionality is much less than the original size of the image $\dot{\gamma}$. The decoder $\Psi$ converts $\h$ to an image similar to the original input
\begin{equation} \label{PsiPhi}
\Psi\circ\Phi(\dot{\gamma}) \approx \dot{\gamma}.
\end{equation}
For our application of lung imaging using an $E$-channel EIT system, we choose the class of functions $\mathbb{AE}$ to contain encoder functions $\Phi$ of the form
\begin{equation} \label{encoder}
\Phi(\dot{\gamma}):=W^{\ell-1}\circledast
 \left(\eta (W^{\ell-2}\circledast \eta
 \left(\cdots
 \eta\left(W^1\circledast \dot{\gamma}\right)
 \cdots \right)
 \right)
\end{equation}
and decoder functions $\Psi$ of the form:
\begin{equation} \label{decoder}
 \Psi(\h)=\tanh\left(W^{2\ell}\circledast^\dag
 \left(\eta (W^{2\ell-1}\circledast^\dag  \eta
 \left(\cdots
 \eta\left(W^{\ell+1}\circledast^\dag \h\right)
 \cdots \right)
\right)
\right)
\end{equation}
Here,  $W\circledast \x$ and $W\circledast^\dag\x$, respectively, are  the convolution and transposed convolution \cite{Zeiler2013} of $\x$ with weight $W$; $\tanh$ is the hyperbolic tangent function; $\eta$ is the rectified linear unit activation function $ReLU$.
The dimension $k$ (the number of the latent variables) is chosen to be smaller than $E(E-3)/2$ as motivated in subsection~\ref{subsect:low_dim_manifold}.  We hope that $\Phi$ and $\Psi$  satisfy:
\begin{enumerate}
\item[(P1)] $\Psi(\Phi(\dot{\gamma}_n))\approx \dot{\gamma}_n$, i.e., the lung ventilation conductivity images in the training data set approximately lie on the low dimensional manifold
\[
\mathcal M=\{ \Psi(\h): \h \in \R^k\}.
\]
\item[(P2)] $\mathcal M$ is a manifold of useful lung EIT images. \cor{In particular, this means that for two images $\dot \gamma_i=\Psi(\h_i)$ and $\dot \gamma_j=\Psi(\h_j)$ in $\mathcal M$, the interpolated image $\Psi((1-t)\h_i+t\h_j)$ should represents a lung EIT image between $\dot \gamma_i$ and $\dot \gamma_j$.  }
\end{enumerate}
Definitely, the autoencoder approach aims to fulfill (P1) by minimizing the reconstruction loss of (\ref{DeepNN2}).
However,  \cor{the second property (P2), as shown in Fig. \ref{fig:result2}, may not be satisfied by the classical deterministic autoencoder approach (\ref{DeepNN2}).   There may be holes in the latent space on which the decoder is never trained \cite{Rubenstein2018}. Hence, $\Psi((1-t)\h_i+t\h_j)$ for some $t$ may be an unrealistic lung ventilation image. This is the reason why we use variational autoencoder, which can be viewed as a regularized autoencoder or nonlinear principal component analysis\cite{Bengio2013,Kingma2013}.}

\cor{Let us also stress, that the mappings $\Psi$ and $\Phi$ will only be approximately inverse to each other, so that $\mathcal M$
might not be a manifold in the strict mathematical sense. However, the set $\mathcal M$ constructed by this approach (also including the VAE-approach described in the next subsection) will always be an image of the low-dimensional latent space $\R^k$ under the continuous mapping $\Psi$.
For the sake of readability, we keep the somewhat sloppy terminology and refer to $\mathcal M$ as low-dimensional manifold.
Moreover, note that the image of $\Psi$ of a closed bounded subset of the latent space $\R^k$ will be compact.}

\subsubsection{Variational autoencoder (VAE)}

The idea of VAE is to add variations in the latent space to the minimization problem (\ref{DeepNN2}), in order to achieve (P2).
More precisely, in VAE, the encoder $\Phi$ is of the following nondeterministic form:
\begin{equation} \label{encoder2}
 \Phi(\dot{\gamma}) =\Phi_{\mbox{\tiny me}}(\dot{\gamma})+\Phi_{\mbox{\tiny std}}(\dot{\gamma})\odot \h_{\mbox{\tiny noise}}
\end{equation}
where $\Phi_{\mbox{\tiny me}}$ outputs a vector of means $\mu=(\mu(1), \cdots, \mu(k))\in\R^k$; $\Phi_{\mbox{\tiny std}}$ outputs a vector of standard deviation $\sigma=(\sigma(1),\cdots, \sigma(k))\in \R^k$; $ \h_{\mbox{\tiny noise}}$ is an auxiliary noise variable sampled from standard normal distribution $\mathcal{N}(0,I)$; and $\odot$ is the element-wise product (Hadamard product). Here, $\Phi_{\mbox{\tiny me}}$ and $\Phi_{\mbox{\tiny std}}$  are of the form (\ref{encoder}) and describe the mean vector $\mu= \Phi_{\mbox{\tiny me}}(\dot{\gamma})$ and the standard variation vector $\sigma=\Phi_{\mbox{\tiny std}}(\dot{\gamma})$ of the non-deterministic encoder function.

\cor{According to (\ref{encoder2}), 
\[
\Phi(\dot{\gamma})=\h \sim \mathcal{N}({\mu},{\Sigma}),
\]
where $\Sigma$ is a diagonal covariance matrix $\Sigma=\mbox{diag}(\sigma(1)^2, \cdots, \sigma(k)^2)$.}
With this non-deterministic approach, we can fulfill the property (P2) since the same input $\dot \gamma$ can now be encoded as a whole range of perturbations of $\h$ in the latent space, and thus we can determine a decoder function $\Psi$ that
maps a whole range of perturbations of $\h$ to useful lung images. \cor{To find $\Psi$,
note that for all images $\dot{\gamma}$, the concatenation $\Psi(\Phi(\dot{\gamma}))$ is now a random vector.
Since we can also interpret $\dot{\gamma}$ as a random vector which always takes the same value, we could ensure the desired property (P1)
by simply minimizing (\ref{DeepNN2}) with $ \| \cdot \|$ now denoting the energy distance between two random vectors. But this trivial approach would obviously still prefer a deterministic
encoder, i.e., $\Phi_{\mbox{\tiny me}}$ will be the encoder function from the standard autoencoder approach, and $\Phi_{\mbox{\tiny std}}\equiv 0$.}

\cor{Hence, in order to ensure variations in the latent space to achieve (P2), we additionally enforce that the distribution of the encoder output is
close to a normal distribution. We thus minimize (\ref{DeepNN2}) 
we minimize (\ref{DeepNN2})
with an additional term that penalizes the Kullback-Leibler (KL) divergence loss between $\mathcal{N}(\mu_n,\Sigma_n )$ and $\mathcal{N}(0,I))$ for all $n=1, \cdots, N$}
\begin{equation} \label{KL2}
D_{KL}(\mathcal{N}(\mu_n,\Sigma_n ) \parallel \mathcal{N}(0,I))= \frac{1}{2} \sum _{j=1}^{k} \left[(\mu_n(j)^2+ \sigma_n(j)^2- \log \sigma_n(j) -1\right].
\end{equation}
We thus obtain the VAE method
\begin{equation}\label{DeepNN3}
(\Psi,\Phi)=  \underset{(\Psi,\Phi) \in \mathbb{VAE}}{\mbox{argmin}}\dfrac{1}{N}\sum_{n=1}^{N}\left[  \| \Psi\circ\Phi (\dot{\gamma}_n) -\dot{\gamma}_n\|^2
+D_{KL}(\mathcal{N}(\mu_n,\Sigma_n ) \parallel \mathcal{N}(0,I))   \right]
\end{equation}
where $\mu_n= \Phi_{\mbox{\tiny me}}(\dot{\gamma}_n)$ and $\sigma_n=\Phi_{\mbox{\tiny std}}(\dot{\gamma}_n)$.
We should note that the covariance $\Sigma_n$ and the term
$
D_{KL}(\mathcal{N}(\mu_n,\Sigma_n ) \parallel \mathcal{N}(0,I))
$
allows smooth interpolation and compactly encoding, resulting in generating compact smooth manifold.

\subsection{The image reconstruction algorithm}
Now, we are ready to explain the reconstruction algorithm $\feit$. Given a set of training data,
the key idea is that we do not aim to learn a nonlinear regression map that directly
reconstructs the conductivity $\dot\gamma$ from the voltage measurements $\dot{\V}$ as this will be a highly
under-determined and ill-posed problem. Instead we first use the variational autoencoder method as explained in the last subsection
to identify a low dimensional latent space encoding the manifold of useful lung images, and then learn the nonlinear regression map
that reconstructs the low-dimensional latent variable as this problem can be expected to be considerably better posed.

To explain this in more detail, let $\{ (\dot{\V}_n, \dot{\gamma}_n):\ n=1, \ldots, N\}$ be a set of training data.
Using the learned encoder  $\Phi_{\mbox{\tiny me}}$ in (\ref{encoder2}), we obtain a set of training data for the latent variable $\{ (\dot{\V}_n, \h_n): n=1, \ldots, N\}$ with
\begin{equation} \label{meanhn}\h_n:=\Phi_{\mbox{\tiny me}}(\dot{\gamma}_n).
\end{equation}
In order to learn a nonlinear reconstruction map that reconstructs the latent variable from the voltage measurements, i.e.
\begin{equation} \label{DeepNN-11}
\fVh( \dot{\V})\approx \h,
\end{equation}
we minimize
\begin{equation} \label{DeepNN-10}
\fVh= \underset{\fVh\in \mathbb{DL}_{h}}{\mbox{argmin}}
\dfrac{1}{N}\sum_{n=1}^{N}\|\fVh(\dot{\mathbf{V}}_n)-\h_n\|^2
\end{equation}
where $\mathbb{DL}_{h}$ is the multilayer perceptrons with their mathematical representation given by
\begin{equation}\label{fVh}
 \fVh^*(\dot{\V}) = W_{\tiny \sharp}^{\ell-1}
 \left(\eta \left(W_{\tiny \sharp}^{\ell-2}
 \left(\cdots
\eta \left(W_{\tiny \sharp}^1 \dot{\V}\right)
 \cdots \right)
 \right)
 \right),
\end{equation}
where $W_{\tiny \sharp} \x$ is the matrix multiplication of $\x$ with weight $W_{\tiny \sharp}$ and $\eta$ is $ReLU$. See Fig. \ref{fig:EIT_scheme} for details.

After finding $\fVh$ by solving the minimization problem (\ref{DeepNN-10}), we can reconstruct the conductivity from the latent variable
by applying the decoder $\Psi$ in (\ref{decoder}).  In summery, the proposed lung EIT reconstruction map is:
\begin{equation}\label{recon}
\feit:=\Psi\circ\fVh:\ \dot{\V}~\longrightarrow~ \h  ~\longrightarrow~ \dot{\gamma} .
\end{equation}

\begin{figure}[h]
	\centering
	\includegraphics[width=0.9\textwidth]{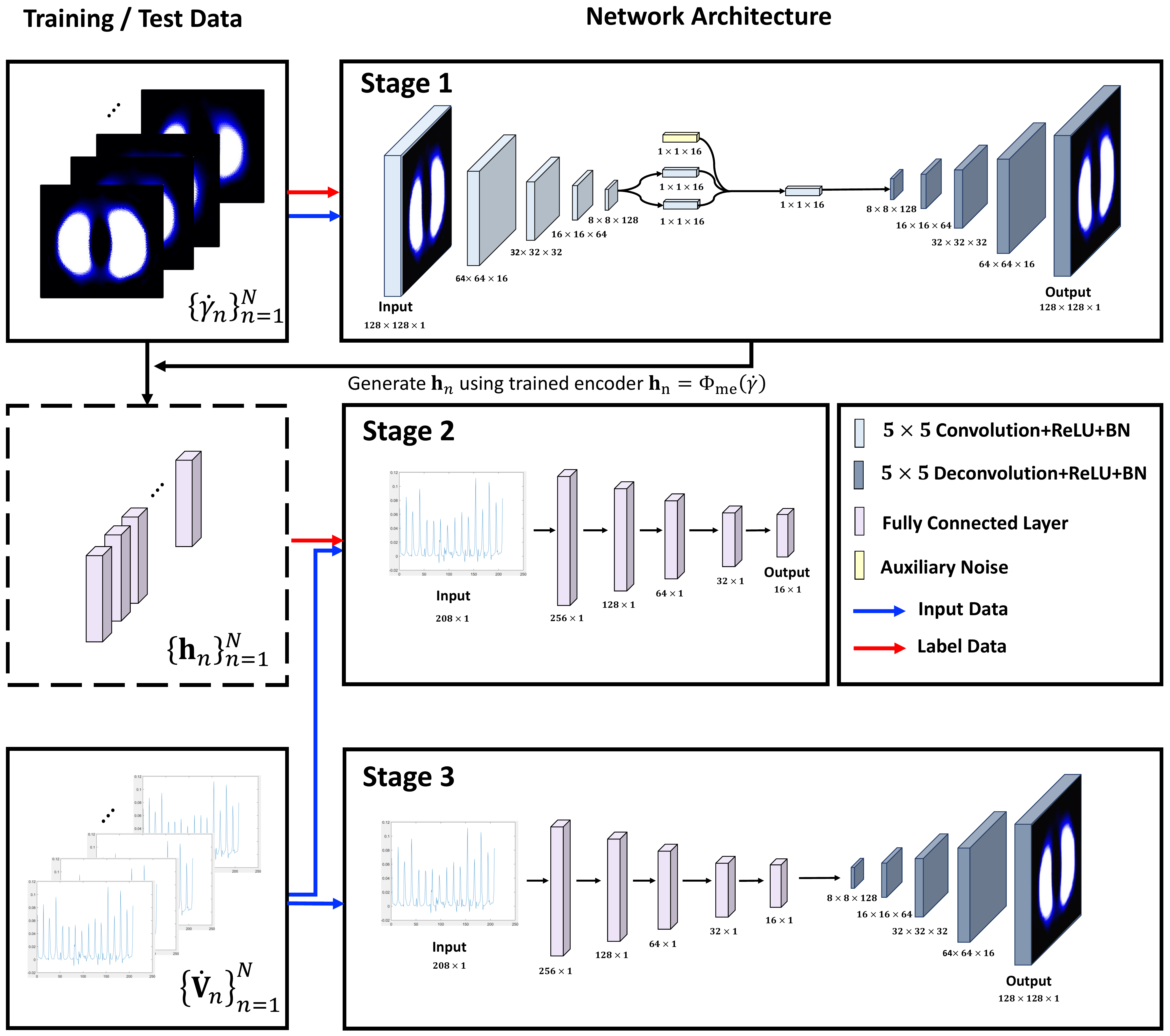}
	\caption{Architecture of the proposed image reconstruction method. In the first stage, variational autoencoder is used to learn a 16-dimensional manifold representation for getting prior knowledge of lung EIT images. In the second stage, a map $\fVh:\dot{\V}\rightarrow \h$ is trained with $\{(\dot{\V}_n,\h_n)\}_{n=1}^{N}$. Here $\h_n$ were given by encoder; $\Phi_{\mbox{\tiny me}}(\dot{\gamma}_n)$. In the third stage, $\feit:=\Psi\circ\fVh^*$ works with trained $\fVh^*$ and $\Psi$.}
	\label{fig:EIT_scheme}
\end{figure}

\section{Experiments and Results}

\subsection{Generating labeled data}

We numerically generate a set of labeled data $\{ (\dot{\V}_n, \dot{\gamma}_n): n=1, \cdots N\}$ using the forward model (\ref{eq:shunt}) with 16-channel EIT system and the filtering process in section \ref{filter}. To mimic practical situations, we use some human experiment results by the fidelity-embedded reconstruction method \cite{Kyounghuun2017} to collect a set of labeled data $\{ (\dot{\V}_m, \dot{\gamma}_m): m=1, \cdots, k\}$. We also interpolate these data to generate an additional data  by computing  the forward problem (\ref{eq:shunt}) and (\ref{NtDdata}). The number of training data  $\{ (\dot{\V}_n, \dot{\gamma}_n): n=1, \cdots N\}$ was 21360. For data augmentation purpose, we added 10 different 5\% Gaussian random noise to $\dot{\V}$. The size of images $ \dot{\gamma}_n$ is $128\times128$. All training was performed using an NVIDIA GeForce GTX 1080ti GPU.

\subsection{Training procedure and reconstruction result\label{alg:EIT_alg1}}
The proposed method consists of three stages: (i) Training variational autoencoder to find a low-dimensional representation; (ii) Training the nonlinear regression map $\fVh$ from EIT data to latent variables; (iii) EIT Image Reconstruction.

\begin{algorithm}[h]
\caption{\small The proposed training and reconstruction algorithm.}
\begin{enumerate}
\setlength{\itemindent}{2em}
  \item [\textbf{Stage 1.}] \textbf{Training variational autoencoder to find a low-dimensional representation}

    \begin{algorithmic}
    \FOR{number of training step}
        \STATE{$\bullet$ Sample the minibatch of $m$ image $\{\mathbf{\dot{\gamma}}_{1},\cdots,\mathbf{\dot{\gamma}}_{m}\}$ from training data.}
        \STATE{$\bullet$ Sample the minibatch of $m$ auxiliary noise $\{\h_{\mbox{\tiny noise},1},\cdots,\h_{\mbox{\tiny noise},m}\}$ from standard normal $\mathcal{N}(\mathbf{0},\mathbf{I} )$.}
        \STATE{$\bullet$ Update the parameters of VAE using the gradient of the loss  $\mathcal L_{\tiny{\mbox{1}}}$ in (\ref{DeepNN3}) with respect to the parameters of VAEs for the minibatch:
       {\small $$
	\mathcal L_{\tiny{\mbox{1}}}  = \dfrac{1}{m}\sum_{n=1}^{m}\left[  \| \Psi\circ\Phi (\dot{\gamma}_n) -\dot{\gamma}_n\|^2
+D_{KL}(\mathcal{N}(\mu_n,\Sigma_n ) \parallel \mathcal{N}(0,I))   \right]
$$}}
    \ENDFOR
    \end{algorithmic}
  \item [\textbf{Stage 2.}] \textbf{Training the nonlinear regression map $\fVh$}

  \begin{algorithmic}
    \FOR{number of training step}
        \STATE{$\bullet$ Sample the minibatch of $m$ image $\{\mathbf{\dot{\gamma}}_{1},\cdots,\mathbf{\dot{\gamma}}_{m}\}$ from training data and encode the sampled images to generate $\{ \Phi_{\mbox{\tiny me}}(\mathbf{\dot{\gamma}}_{1}) ,\cdots,\Phi_{\mbox{\tiny me}}(\mathbf{\dot{\gamma}}_{m}) \}$.}
        \STATE{$\bullet$  Sample the minibatch of $m$ paired voltage data $\{\mathbf{\dot{\V}}_{1},\cdots,\mathbf{\dot{\V}}_{m}\}$ from training data set.}
        \STATE{$\bullet$ Update the parameters of $\fVh$ using gradient of loss $\mathcal L_{\tiny{\mbox{2}}}$ in (\ref{DeepNN-10}) with respect to the parameters of $\fVh$ for the minibatch:
        {\small $$
	\mathcal L_{\tiny{\mbox{2}}}  = \dfrac{1}{m}\sum_{n=1}^{m}\|\fVh(\dot{\mathbf{V}}_n)-\h_n\|^2
$$}}
    \ENDFOR
  \end{algorithmic}
	\item [\textbf{Stage 3.}] \textbf{EIT Image Reconstruction}\\
  Using the trained nonlinear regression map $\fVh$ and decoder $\Psi$, a EIT reconstruction map $\feit$ is acheived by
  \begin{displaymath}
    \feit(\dot{\V})=\Psi\circ\fVh(\dot{\V}).
  \end{displaymath}
\end{enumerate}
\end{algorithm}

We used the AdamOptimizer \cite{Kingma2015} to minimize loss. The batch normalization \cite{Ioffe2015} was also applied.
After finishing the training process (stage1, 2), a EIT reconstruction images were given by $\feit(\dot{\V})=\Psi\circ\fVh(\dot{\V})$. The reconstruction result is shown in Fig. \ref{fig:result_recon}.

\begin{figure}[!h]
	\centering
    \begin{tabular}{c}\hline
        {\footnotesize{Reconstruction by proposed deep learning based method}}\\
        			{\includegraphics[width=0.9\textwidth ]{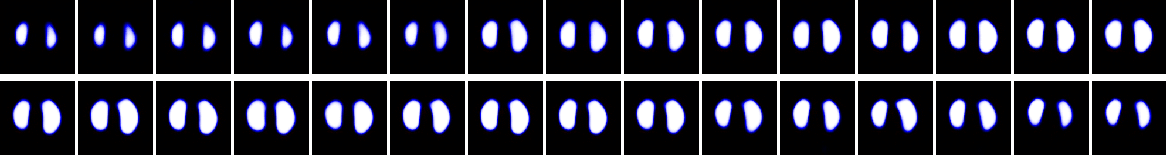}}\\
    \end{tabular}
    \caption{Reconstruction result of deep learning based method from real experimental data. The reconstruction were given by $\feit(\dot{\V})=\Psi\circ\fVh(\dot{\V})$.}
	\label{fig:result_recon}
\end{figure}

\subsection{Visualizations of learned manifold}

Our experimental result shows that lung EIT images lie on the low-dimensional smooth compact manifold. For easy visualization purpose, we visualized the lung EIT manifold with two dimensional latent space to project the high dimensional image to low dimensional manifold. Here we choose the equally spaced latent variables $\h\in[-3,3]^2$ and we decoded them to generate the images as shown in Fig. \ref{fig:latent_vis}.

\begin{figure}[ht!]
	\centering
	{\includegraphics[width=0.6\textwidth]{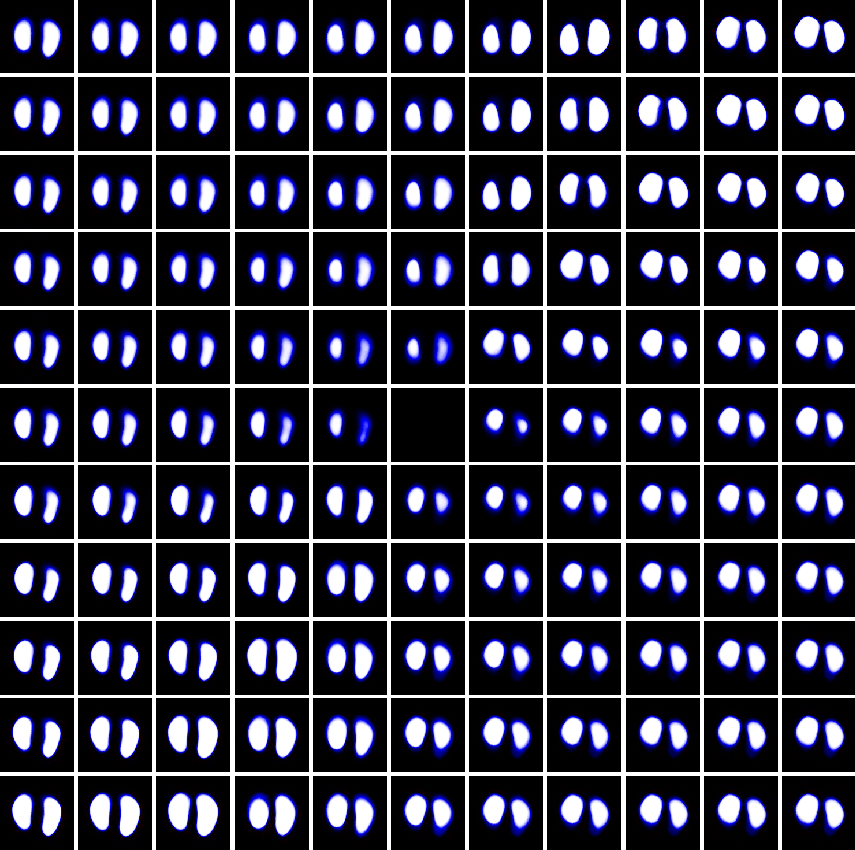}}
	\caption{Visualization of learned lung EIT manifold with two-dimensional latent space.}
	\label{fig:latent_vis}
\end{figure}

We also visualized manifold with 16-dimensional latent space. Since we can not directly visualize 16-dimensional manifold, we visualized the manifold along each axis of the 16-dimensional latent space as shown in Fig. \ref{fig:latent_pertub} (a). Here, each $i$-th row in Fig. \ref{fig:latent_vis} (a) shows lung EIT image $\Psi(\h_{i,j})$ with $\h_{i,j}=\delta_j \e_i$ where $\e_i$ is unit vector whose $i$-th component is one and otherwise zero with $\delta_j\in\{-6, \cdots, 0, \cdots, 6  \}$ for $i\in\{1,\cdots,16\}$ and $j\in\{1,\cdots,13\}$.
Each $(i,j)$ image in Fig. \ref{fig:latent_pertub} (b) shows the tangent which denotes the direction from $(i,j)$ image to $(i,j+1)$ image in Fig. \ref{fig:latent_pertub} (a) for $i\in\{1,\cdots,16\}$ and $j\in\{1,\cdots,12\}$. From manifold visualization, we can verify that change of lung images(e.g., lung ventilations) are observed when we walk in the latent space.

\begin{figure}[ht!]
	\centering
	\subfigure[]{\includegraphics[height=0.55\textwidth]{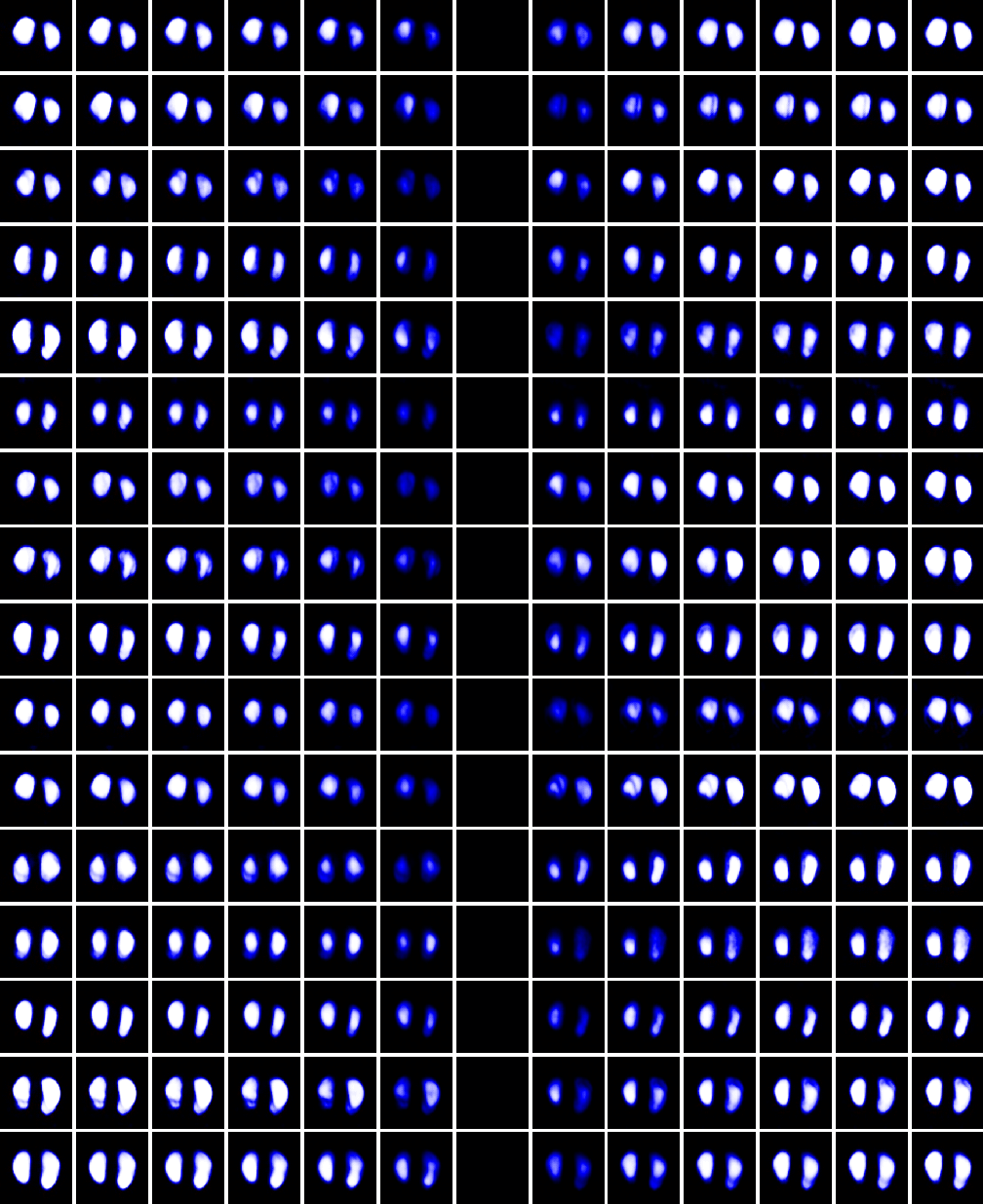}}
	\subfigure[]{\includegraphics[height=0.55\textwidth]{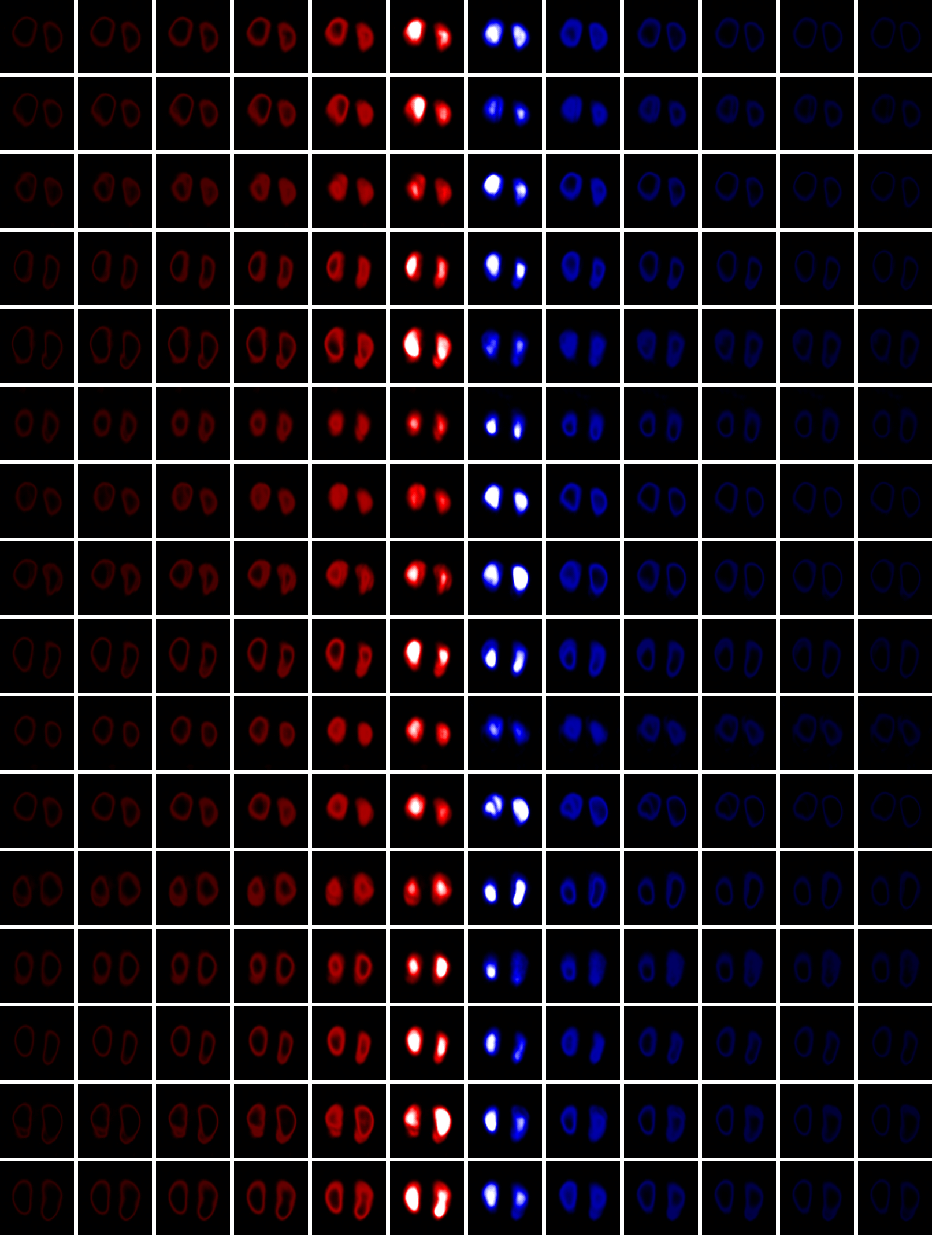}}
	\caption{Visualization of learned manifold with 16-dimensional latent space. (a) shows $\Psi(\h_{i,j})$ where $\h_{i,j}$ have value $\delta_j \in \{-6,\cdots,6\}$ in $i$-th component and otherwise zero. (b) shows the its tangents. We expected that small changes in latent space produce small changes in image space. Here each blue and red color denote the positive and negative value.}
	\label{fig:latent_pertub}
\end{figure}

\subsection{Advantages on VAE-based manifold constraint}

The proposed method is advantageous over the conventional regularization methods due to the low dimensional  manifold constraint in reconstructing lung images fitting EIT data.
The conventional methods does not work for obese people, which is the case where lung is placed away from the surface electrodes. The conventional regularization methods may produce merged images due to their fundamental nature penalizing image perturbation, as shown in Fig. \ref{fig:result_comparison}. On the other hand, the proposed method always generate lung-like images due to the learning constraint of lung images.

In this experiment, we use a simulated image $\dot\gamma$ and compute the corresponding simulated data $\dot{\V}$  using the forward model (\ref{eq:shunt}) with $\gamma=1+\dot\gamma$ and 16-channel EIT system. Here, we added 5\% Gaussian random noise to $\dot{\V}$. For each image of $\dot\gamma$, 10 different data are computed by adding the noise. Totally 21360(=2136$\times$ 10) data pairs are used for the training process. The images in Fig. \ref{fig:result_comparison} compares the proposed method with regularized data fitting methods $\underset{\dot{\gamma}}{\mbox{argmin}}\|\dot{\V}-\Bbb S\dot {\gamma}\|^2+\lambda \|\dot{\gamma}\|^2_2$ by using a simulated EIT data. In case2, as shown in Fig. \ref{fig:result_comparison}, two lungs are merged in the reconstructed images by the regularized data fitting methods, but not in the reconstructed image by the proposed method. \cor{It is because the measured data are highly sensitive to conductivity changes near the current-injection electrodes, whereas the sensitivity drops rapidly as the distance increases\cite{Barber1988}.}

\begin{figure}[!h]
	\centering
	{\includegraphics[width=0.9\textwidth]{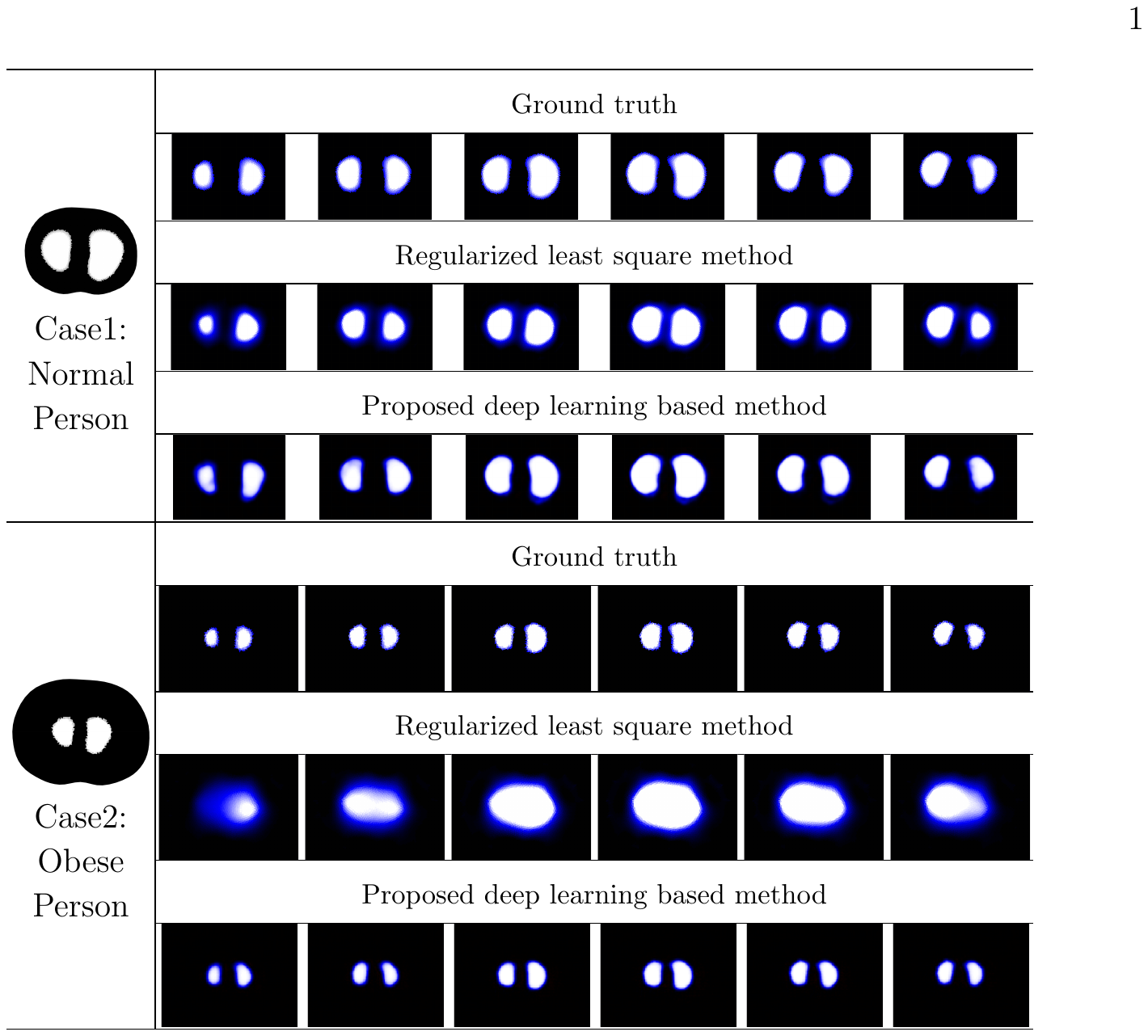}}
    \caption{Comparison of reconstruction methods with simulated data. We compare the proposed deep learning method  \ref{alg:EIT_alg1} with the standard regularized least square method \eref{leastSquare00} for two cases: normal person and obese person. In case 1 (normal person), both methods produces reasonably accurate reconstructions. In case 2 (obese person), however, the standard method gives merged image because electrodes positions are distant from the support of $\dot{\gamma}$. On the other hand, the proposed method provides useful reconstruction. Here, we resized our result of deep learning to the same ratio of reconstruction of conventional method for the comparison.  }
	\label{fig:result_comparison}
\end{figure}

\section{Discussion and Conclusion}
This paper addressed the problem of handling ill-posed nonlinear inverse problems by suggesting a low dimensional representation of target images.
ElT is a typical example of ill-posed nonlinear inverse problems where the dimension of measured data is much lower than the number of unknowns (pixels of the image). Moreover, there exist complicated nonlinear interrelations among inputs (a practical version of Dirichlet-to-Neuman data), outputs (impedance imaging), and system parameters. Finding a robust reconstruction map $\feit$ for clinical practice requires to use prior knowledge on image expression.   Regularization techniques have been used widely to deal with ill-posedness, but the conventional $L^p$-norm based regularization may not provide a proper prior of target images in practice. See Fig. \ref{fig:sensitivity}.

\cor{ Deep learning framework may provide a nonlinear regression on training data which acts as learning complex prior knowledge on the output. VAE allows to achieve compact representation (or low dimensional manifold learning) for prior information of lung EIT images, as shown in Fig. \ref{fig:result2} and  Fig. \ref{fig:tangent}. Dai {\it et al.} \cite{Dai2018} viewed VAE as the natural evolution of robust PCA models, capable of learning nonlinear manifolds of unknown dimension obscured by gross corruptions.
Given data $\{ \dot{\gamma}_n\in \Bbb R^{d}:\ n=1,\ldots, N\}$, the encoder $\Phi(\dot{\gamma})$ in \eqref{encoder2} can be viewed as a conditional distribution $q(\h|\dot{\gamma})$ that satisfies $q(\h|\dot{\gamma})=\mathcal{N}(\mu,\Sigma)$. The decoder $\Psi$ can be represented by a conditional distribution $p(\dot{\gamma}|\h)$ with $p(\h)=\mathcal{N}(0,I)$. VAE tries to match $p(\h|\dot{\gamma})$ and  $q(\h|\dot{\gamma})$.  VAE encoder covariance can help to smooth out undersirable minima  in the energy landscape of what would otherwise resemble a more traditional deterministic autoencoder \cite{Dai2018}.  }

Given the training data $\{ (\dot{\V}_n, \dot{\gamma}_n): n=1, \cdots, N\}$, the encoding-decoding pair $(\Phi, \Psi)$ and the nonlinear regression map $\fVh$ in (\ref{DeepNN-10}) satisfy the following properties as in the sense of Hadamard \cite{Hadamard1902}:
 \begin{itemize}
 \item {\it Approximate Existence :} Given $\dot{\V}$, there exist $\h$ such that $\fVh(\dot{\mathbf{V}})\approx \h$.
  \item {\it Approximate Uniqueness :} For any two different EIT data $\dot{\mathbf{V}}, \dot{\mathbf{V}}'$, we have
   $\| \Phi(\dot{\mathbf{V}}) -\Phi(\dot{\mathbf{V}}')\|  \gtrsim  \| \fVh(\dot{\mathbf{V}})-\fVh(\dot{\mathbf{V}}')\|$.
 \item {\it Stability :}  $\dot{\mathbf{V}}\thickapprox  \dot{\mathbf{V}}'$ implies  $\Phi(\dot{\mathbf{V}}) \thickapprox  \Phi(\dot{\mathbf{V}}')$
\end{itemize}

\cor{ The proposed deep learning approach is a completely different paradigm from regularized data-fitting approaches that use a “single” data-fidelity with regularization. The deep learning approach instead uses a “group” data fidelity to learn an inverse map from the training data. The deep learning framework can provide a nonlinear regression for the training data, which acts as learning complex prior knowledge of the output. Let us explain this using the well-known example of sub-Nyquist sampling (compressive sensing) MRI, which is an ill-posed inverse problem with fewer equations than unknowns. The well-known compressed sensing (CS) method with random sampling  is based on the regularized data-fitting approach (single data fidelity), where total variation regularization  is used to enforce the image sparsity to compensate for undersampled data \cite{Candes2006,Lustig2007}.  The CS method requires non-uniform random subsampling, since it is effective to reduce noise. On the other hand,  the deep learning-based method \cite{Hyun2018} provides a low-dimensional latent representation of MR images, which can be learned from the training set (group data fidelity). The learned reconstruction function from the group data fidelity appears to have highly expressive representation capturing anatomical geometry as well as small anomalies \cite{Hyun2018}.}

Deep learning techniques have expanded our ability by sophisticated ``disentangled representation learning'' though training data. DL methods appear to overcome limitations of existing mathematical methods in handling various ill-posed problems. Deep learning methods will improve their performance as training data and experience accumulate over time. However, we do not have rigorous mathematical grounds behind why deep learning methods work so well. We need to develop mathematical theories to ascertain their reliabilities.

\section*{Acknowledgement}
J.K.S. and K.C.K were supported by the National Research Foundation of Korea (NRF) grant 2015R1A5A1009350 and 2017R1A2B20005661.
K.L. and A.J. were supported by NRF grant 2017R1E1A1A03070653.

\stop
